\newcommand\cyr{%
\renewcommand\rmdefault{wncyr}%
\renewcommand\sfdefault{wncyss}%
\renewcommand\encodingdefault{OT2}%
\normalfont
\selectfont}
\DeclareTextFontCommand{\textcyr}{\cyr}
\theoremstyle{plain}
\newtheorem{theorem}{Theorem}[section]
\newtheorem{lemma}[theorem]{Lemma}
\newtheorem{proposition}[theorem]{Proposition}
\theoremstyle{remark}
\newtheorem{remark}[theorem]{Remark}
\numberwithin{equation}{section}
\def\setof#1#2{\{#1\mid #2 \}}
\def\Z{\mathbb Z}
\def\F{\mathbb F}
\def\ism{\sim}
\def\aux{\approx}
\def\ist{\equiv}
\begin{document}

\title{Bol loops and Bruck loops of order $pq$ up to isotopism}

\author{Petr Vojt\v{e}chovsk\'y}

\address{Department of Mathematics, University of Denver, South York Street 2390, Denver, Colorado, 80208, U.S.A.}

\email{petr@math.du.edu}

\begin{abstract}
Let $p>q$ be odd primes. We classify Bol loops and Bruck loops of order $pq$ up to isotopism. When $q$ does not divide $p^2-1$, the only Bol loop (and hence the only Bruck loop) of order $pq$ is the cyclic group of order $pq$. When $q$ divides $p^2-1$, there are precisely $\lfloor(p-1+4q)(2q)^{-1}\rfloor$ Bol loops of order $pq$ up to isotopism, including a unique nonassociative Bruck loop of order $pq$.
\end{abstract}

\subjclass[2010]{Primary: 20N05; Secondary: 12F05, 15B05, 15B33, 20D20.}

\keywords{Bol loop, Bruck loop, quadratic field extension, enumeration, isotopism.}

\thanks{Research partially supported by the PROF grant of the University of Denver.}

\maketitle

\section{Introduction}

Let $p>q$ be odd primes. In this short note we classify Bol loops of order $pq$ up to isotopism, building upon the work of Niederreiter and Robinson \cite{NR1, NR2}, and Kinyon, Nagy and Vojt\v{e}chovsk\'y \cite{KNV}. The classification turns out to be a nice application of group actions on finite fields.

A \emph{quasigroup} is a groupoid $(Q,\cdot)$ in which all left translations $yL_x=xy$ and all right translations $yR_x=yx$ are bijections. A \emph{loop} is a quasigroup $Q$ with identity element $1$. A (right) \emph{Bol loop} is a loop satisfying the identity $((zx)y)x = z((xy)x)$, and a (right) \emph{Bruck loop} is a Bol loop satisfying the identity $(xy)^{-1} = x^{-1}y^{-1}$.

Two loops $Q_1$, $Q_2$ are said to be \emph{isotopic} if there are bijections $f$, $g$, $h:Q_1\to Q_2$ such that $(xf)(yg)=(xy)h$ for every $x$, $y\in Q_1$. If $f=g=h$, the loops are said to be \emph{isomorphic}. Since an isotopism corresponds to an independent renaming of rows, columns and symbols in a multiplication table, it is customary to classify loops (quasigroups and latin squares \cite{DK, LM, MW}) not only up to isomorphism but also up to isotopism.

Alongside Moufang loops \cite{Bruck, Moufang}, automorphic loops \cite{BP, KKPV} and conjugacy closed loops \cite{D, GR, K}, Bol loops and Bruck loops are among the most studied varieties of loops \cite{Bol, Glauberman1, Glauberman2, Kiechle, Nagy, Robinson}. We refer the reader to \cite{Belousov, Bruck} for an introduction to loop theory and to \cite{KNV} for an introduction to the convoluted history of the classification of Bol loops whose order is a factor of only a few primes.

\medskip

The following construction is of key importance for Bol loops of order $pq$. Let
\begin{displaymath}
    \Theta=\setof{\theta_i}{i\in \F_q}\subseteq \F_p
\end{displaymath}
be such that $\theta_0=1$ and $\theta_i^{-1}\theta_j\in\F_p^*\setminus\{-1\}$ for every $i$, $j\in \F_q$. Define $\mathcal Q(\Theta)$ on $\F_q\times \F_p$ by
\begin{equation}\label{Eq:MultLinear}
    (i,j)(k,\ell) = (i+k,\ \ell(1+\theta_k)^{-1} + (j+\ell(1+\theta_k)^{-1})\theta_i^{-1}\theta_{i+k}).
\end{equation}
Then $\mathcal Q(\Theta)$ is always a loop.

This construction was introduced and carefully analyzed by Niederreiter and Robinson in \cite{NR1}. We can restate some of their results as follows:

\begin{theorem}\label{Th:Summary}\cite{NR1}
Let $p>q$ be odd primes. Then $\mathcal Q(\Theta)$ is a Bol loop if and only if there exists a bi-infinite $q$-periodic sequence $(u_i)$ solving the recurrence relation
\begin{equation}\label{Eq:Recurrence}
    u_{n+2} = \lambda u_{n+1} - u_n
\end{equation}
for some $\lambda\in\F_p^*$ such that $u_0=1$ and $u_i^{-1}u_j\in\F_p^*\setminus\{-1\}$ for every $i$, $j$. (Then $\theta_i = u_i^{-1}$ for every $i\in\F_q$.)

If $\mathcal Q(\Theta)$ is a Bol loop then it is a Bruck loop if and only if $u_i=u_{-i}$ for every $i\in\F_q$.

Suppose that two Bol loops correspond to the sequences $(u_i)$ and $(v_i)$, respectively. Then the loops are isomorphic if and only if there is $s\in \F_q^*$ such that $u_i = v_{si}$ for every $i\in \F_q$, and the loops are isotopic if and only if there are $s\in\F_q^*$ and $r\in\F_q$ such that $u_i=v_r^{-1}v_{si+r}$ for every $i\in\F_q$.
\end{theorem}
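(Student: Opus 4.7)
The plan is to obtain each of the four claims by a translation of the corresponding analysis in \cite{NR1}, with the substitution $u_i=\theta_i^{-1}$ linearizing the equations one meets.

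For the Bol characterization, I would substitute the multiplication rule \eqref{Eq:MultLinear} into the Bol identity $((zx)y)x = z((xy)x)$. The first coordinate contributes nothing beyond associativity of $(\F_q,+)$, while the second coordinate yields a polynomial identity in the last entry $\ell$ of $z=(h,\ell)$ that must hold for all $\ell\in\F_p$. Matching coefficients in $\ell$ gives a system of relations among $\theta_i,\theta_k,\theta_{i+k}$ for all $i,k\in\F_q$. These are precisely the equations studied in \cite{NR1}; after setting $u_i=\theta_i^{-1}$ they collapse to a single condition saying that $(u_{n+2}+u_n)/u_{n+1}$ is a constant $\lambda\in\F_p^*$ independent of $n$. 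The conditions $\theta_0=1$ and $\theta_i^{-1}\theta_j\in\F_p^*\setminus\{-1\}$ transcribe directly into $u_0=1$ and $u_i^{-1}u_j\in\F_p^*\setminus\{-1\}$, while $q$-periodicity of $(u_i)$ is forced by $\Theta$ being indexed by $\F_q$.

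For the Bruck condition, I would first identify the two-sided identity and compute the inverse of $(i,j)$ in a Bol loop $\mathcal Q(\Theta)$; Bol loops have the left inverse property, and a short calculation using \eqref{Eq:MultLinear} shows that $(i,j)^{-1}$ lies in $\{-i\}\times\F_p$ with second coordinate linear in $j$. Plugging $x=(i,0)$, $y=(0,0)$ (and then a generic pair) into the automorphic inverse property $(xy)^{-1}=x^{-1}y^{-1}$ and comparing coefficients reduces, after the substitution $u_i=\theta_i^{-1}$, to the symmetry $u_i=u_{-i}$.

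For isomorphism, I would use the fact that $N=\{0\}\times\F_p$ is the unique subloop of order $p$ (hence characteristic) in $\mathcal Q(\Theta)$, so any isomorphism $\varphi:\mathcal Q(\Theta)\to\mathcal Q(\Theta')$ preserves $N$ and induces a group isomorphism $\F_q\to\F_q$, necessarily multiplication by some $s\in\F_q^*$. Matching the resulting relation on second coordinates, and normalising via $u_0=v_0=1$, yields $u_i=v_{si}$, which is what \cite{NR1} records.

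For isotopism I expect the main obstacle. Every isotope of a loop is isomorphic to a principal isotope, so it suffices to classify principal isotopes of $\mathcal Q(\Theta)$ at pairs $(a,b)\in\mathcal Q(\Theta)^2$. The right Bol property implies that principal isotopes of the form $x\ast y = (x R_b^{-1})(y L_a^{-1})$ with $a\in N$ are isomorphic to $\mathcal Q(\Theta)$ itself (this absorbs the translations inside $\F_p$), so up to isomorphism only the first coordinate $r\in\F_q$ of the isotopy element matters. A direct calculation of the principal isotope at $(r,0)$ shows it equals $\mathcal Q(\Theta_r)$, where $\Theta_r$ corresponds to the shifted and rescaled sequence $u_i'=u_r^{-1}u_{i+r}$; the rescaling by $u_r^{-1}$ is exactly what is needed to restore the normalisation $u_0'=1$. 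Combining this principal-isotopy step with the isomorphism step $s\in\F_q^*$ gives the full equivalence $u_i=v_r^{-1}v_{si+r}$, and the reverse direction is checked by exhibiting the explicit isotopism built from the shift by $r$ and the multiplication by $s$.
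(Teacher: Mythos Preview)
The paper does not give its own proof of this theorem: it is presented as a restatement of results from \cite{NR1} (``We can restate some of their results as follows''), with no argument supplied in the present paper. Hence there is nothing here to compare your outline against; your sketch is essentially a plan for reproducing the relevant portions of \cite{NR1}, which is appropriate given the citation.

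Two small corrections to your sketch. First, right Bol loops (the variety defined here by $((zx)y)x=z((xy)x)$) satisfy the \emph{right} inverse property $(xy)y^{-1}=x$, not the left inverse property; this does not change your strategy for the Bruck part, but the wording should be fixed. Second, in the isotopism step the right Bol property guarantees that varying the \emph{right} isotopy parameter (the one appearing in $R_b^{-1}$) yields isomorphic loops, so it is the $b$-component you can absorb, not the $a$-component as written; after this swap your reduction to a shift $r\in\F_q$ in the first coordinate goes through as you describe.
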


It is not at all obvious that every Bol loop of order $pq$ is of the form $\mathcal Q(\Theta)$. This was proved in \cite{KNV}, where the isomorphism problem was resolved as follows:

\begin{theorem}\label{Th:MainKNV}\cite{KNV}
Let $p>q$ be odd primes. A nonassociative Bol loop of order $pq$ exists if and only if $q$ divides $p^2-1$. If $q$ divides $p^2-1$ then there is a unique nonassociative Bruck loop $B_{p,q}$ of order $pq$ up to isomorphism and there are precisely
\begin{displaymath}
    \frac{p-q+4}{2}
\end{displaymath}
Bol loops of order $pq$ up to isomorphism. All these loops are of the form $\mathcal Q(\setof{\theta_i}{i\in\F_q})$ with multiplication \eqref{Eq:MultLinear} and are obtained as follows:

Set $\theta_i=1$ for every $i\in\F_q$ for the cyclic group of order $pq$. For the non-cyclic loops, fix a non-square $t$ of $\F_p$, write $\F_{p^2} = \setof{u+v\sqrt{t}}{u,\,v\in\F_p}$, and let $\omega\in\F_{p^2}$ be a primitive $q$th root of unity.  Let
\begin{displaymath}
    \Gamma_{p,q} = \left\{\begin{array}{ll}
        \setof{\gamma\in\F_p}{\gamma=0\text{ or }1-\gamma^{-1}\not\in\langle\omega\rangle},&\text{if $q$ divides $p-1$},\\
        \setof{\gamma\in 1/2 + \F_p\sqrt t}{1-\gamma^{-1}\not\in\langle\omega\rangle},&\text{if $q$ divides $p+1$}.
    \end{array}\right.
\end{displaymath}
Let $f$ be the bijection on $\Gamma_{p,q}$ defined by
\begin{displaymath}
    \gamma\mapsto 1-\gamma.
\end{displaymath}
The non-cyclic Bol loops of order $pq$ up to isomorphism correspond to the orbits of the group $\langle f\rangle$ acting on $\Gamma_{p,q}$. For every orbit representative $\gamma$ let
\begin{displaymath}
    \theta_i = \theta(\gamma)_i = \frac{1}{\gamma\omega^i+(1-\gamma)\omega^{-i}}.
\end{displaymath}
The choice $\gamma=1/2$ results in the nonassociative Bruck loop $B_{p,q}$. If $q$ divides $p-1$, the choice $\gamma=1$ results in the nonabelian group of order $pq$.
\end{theorem}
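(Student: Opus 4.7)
The plan is to combine Theorem~\ref{Th:Summary} with the fact from \cite{KNV} that every Bol loop of order $pq$ is isomorphic to some $\mathcal Q(\Theta)$, reducing the problem to parametrizing the admissible $q$-periodic solutions of \eqref{Eq:Recurrence} and counting them modulo the isomorphism relation given in Theorem~\ref{Th:Summary}. First I would solve the recurrence: its characteristic polynomial $x^2-\lambda x+1$ has roots $\omega,\omega^{-1}$ in an algebraic closure of $\F_p$, and a $q$-periodic solution forces $\omega^q=1$. The case $\omega=1$ produces only the constant sequence $u_i\equiv 1$, hence the cyclic group. Otherwise $\omega$ is a primitive $q$-th root of unity, whose existence is equivalent to $q\mid p-1$ (so $\omega\in\F_p$) or $q\mid p+1$ (so $\omega\in\F_{p^2}\setminus\F_p$); in either case $q\mid p^2-1$, explaining the existence dichotomy. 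Fixing such an $\omega$ and using $u_0=1$, every non-constant solution can be written uniquely as
\begin{displaymath}
    u_i=\gamma\omega^i+(1-\gamma)\omega^{-i},\qquad \gamma\in\F_{p^2}.
\end{displaymath}

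Next I would impose the two conditions of Theorem~\ref{Th:Summary}. Linear independence of the characters $i\mapsto\omega^{\pm i}$ implies that invariance of $u_i$ under the Frobenius $x\mapsto x^p$ amounts to $\gamma\in\F_p$ when $q\mid p-1$ and to $\overline{\gamma}=1-\gamma$, i.e.\ $\gamma\in 1/2+\F_p\sqrt{t}$, when $q\mid p+1$. The remaining constraint $u_i\neq -u_j$ factors as
\begin{displaymath}
    u_i+u_j=(1+\omega^{j-i})\bigl(\gamma\omega^i+(1-\gamma)\omega^{-j}\bigr),
\end{displaymath}
and because $q$ is odd we have $1+\omega^k\neq 0$, so (for $\gamma\neq 0$) the condition reduces to $-\gamma/(1-\gamma)\notin\langle\omega\rangle$, equivalently $1-\gamma^{-1}\notin\langle\omega\rangle$. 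Together these reproduce the definition of $\Gamma_{p,q}$. A direct count---using, in the $q\mid p+1$ case, the bijection $c\mapsto 1/(c+1)$ from the punctured norm-$1$ subgroup $\setof{c\in\F_{p^2}^*}{c\bar c=1,\,c\neq -1}$ onto $1/2+\F_p\sqrt{t}$---shows that the $q-1$ forbidden values of $\gamma$ correspond precisely to the nontrivial elements of $\langle\omega\rangle$, giving $|\Gamma_{p,q}|=p-q+1$.

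To conclude, I would apply the isomorphism criterion of Theorem~\ref{Th:Summary}: two parameters $\gamma,\gamma'\in\Gamma_{p,q}$ yield isomorphic loops iff $\gamma\omega^{si}+(1-\gamma)\omega^{-si}=\gamma'\omega^i+(1-\gamma')\omega^{-i}$ for some $s\in\F_q^*$ and every $i$. Linear independence of characters forces $\{\omega^s,\omega^{-s}\}=\{\omega,\omega^{-1}\}$, so $s=\pm 1$ and either $\gamma=\gamma'$ or $\gamma'=1-\gamma$. Hence the isomorphism classes of non-cyclic Bol loops correspond to the orbits of the involution $f:\gamma\mapsto 1-\gamma$ on $\Gamma_{p,q}$. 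The only fixed point of $f$ is $\gamma=1/2$, and it lies in $\Gamma_{p,q}$ because $\langle\omega\rangle$ has odd order and hence does not contain $-1$; every other orbit has size $2$, so the number of non-cyclic isomorphism classes is $1+(p-q)/2$, and adding the cyclic group gives the stated total $(p-q+4)/2$. The Bruck condition $u_i=u_{-i}$ of Theorem~\ref{Th:Summary} is equivalent to $\gamma=1-\gamma$, so $\gamma=1/2$ yields the unique nonassociative Bruck loop $B_{p,q}$; for $q\mid p-1$ the sequence from $\gamma=1$ is $u_i=\omega^i$, and substituting $\theta_i=\omega^{-i}$ into \eqref{Eq:MultLinear} directly simplifies to $(i,j)(k,\ell)=(i+k,\,j\omega^{-k}+\ell)$, which is the multiplication of the nonabelian semidirect product $\F_p\rtimes\F_q$. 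I expect the main obstacle to be the clean bookkeeping in the $q\mid p+1$ case: transferring the constraints from $\F_{p^2}$ onto $1/2+\F_p\sqrt{t}$ and ensuring that the count of $|\Gamma_{p,q}|$ goes through uniformly with the $q\mid p-1$ case.
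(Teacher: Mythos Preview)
Your approach is essentially the same strategy the paper uses in Section~2 to recover Theorem~\ref{Th:MainKNV}, and in fact you supply explicit arguments for two steps the paper merely cites from \cite{KNV}: the derivation of $\Gamma_{p,q}$ from the constraints on $(u_i)$ (your Frobenius-invariance argument together with the factorization $u_i+u_j=(1+\omega^{j-i})(\gamma\omega^i+(1-\gamma)\omega^{-j})$), and the count $|\Gamma_{p,q}|=p-q+1$ (your bijection from the punctured norm-$1$ subgroup in the $q\mid p+1$ case). The orbit count under $f$, the identification of $\gamma=1/2$ as the Bruck loop, and the verification that $\gamma=1$ gives the nonabelian group all match the paper's treatment.

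There is, however, one genuine gap. After ``Fixing such an $\omega$'' you assert that \emph{every} non-constant admissible sequence can be written as $u_i=\gamma\omega^i+(1-\gamma)\omega^{-i}$. Taken literally this is false: the admissible values of $\lambda$ are $\lambda_j=\omega^j+\omega^{-j}$ for $1\le j\le (q-1)/2$, and a solution of \eqref{Eq:Recurrence} for $\lambda_j$ with $j\ne 1$ lies in the $\lambda_j$-eigenspace, not in the $\lambda_1$-eigenspace. What \emph{is} true---and what you need---is that every non-cyclic Bol loop is isomorphic to one coming from the fixed $\omega$: if $u$ solves \eqref{Eq:Recurrence} for $\lambda_j$, then $v_i:=u_{j^{-1}i}$ solves it for $\lambda_1$ and gives an isomorphic loop by the criterion in Theorem~\ref{Th:Summary}. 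The paper makes this reduction explicit via Lemma~\ref{Lm:ChangeEigenvalue}, showing that the isotopism relation permutes the eigenspaces $\mathcal A_{p,q}^j$ transitively for $j\in\F_q^*$. You should insert the analogous one-line reindexing before restricting attention to a single $\omega$; once that is in place, your character-independence argument correctly forces $s=\pm 1$ \emph{within} the $\lambda_1$-class, and the remainder of your count goes through.
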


Since a loop isotopic to a group is already isomorphic to it, Theorem \ref{Th:MainKNV} contains the classification of Bruck loops of order $pq$ up to isotopism. In this paper we finish the classification of Bol loops of order $pq$ up to isotopism by proving:

\begin{theorem}\label{Th:Main}
Let $p>q$ be odd primes such that $q$ divides $p^2-1$. Then there are precisely
\begin{displaymath}
    \left\lfloor \frac{p-1+4q}{2q}\right\rfloor
\end{displaymath}
Bol loops of order $pq$ up to isotopism. With the notation of Theorem $\ref{Th:MainKNV}$, these loops are obtained as follows:

Set $\theta_i=1$ for every $i\in\F_q$ for the cyclic group of order $pq$. The non-cyclic loops correspond to orbit representatives of the group $\langle f,g\rangle$ acting on $\Gamma_{p,q}$, where $g$ is given by
\begin{displaymath}
    \gamma\mapsto \frac{\gamma\omega}{\gamma\omega+(1-\gamma)\omega^{-1}}.
\end{displaymath}
\end{theorem}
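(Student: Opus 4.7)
By Theorem \ref{Th:MainKNV}, the non-cyclic Bol loops of order $pq$ are parametrized up to isomorphism by $\langle f\rangle$-orbits on $\Gamma_{p,q}$, via $\gamma\mapsto\mathcal Q(\Theta(\gamma))$ with $u_i(\gamma)=\gamma\omega^i+(1-\gamma)\omega^{-i}$. The plan is to translate the additional freedom of isotopism from Theorem \ref{Th:Summary} into an enlarged group action on $\Gamma_{p,q}$, verify that the group is $\langle f,g\rangle\cong D_{2q}$, and then count its orbits.

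First I would identify the action. Factoring the sequence $v_i:=u_r(\gamma)^{-1}u_{i+r}(\gamma)$ yields at once $v_i=\gamma'\omega^i+(1-\gamma')\omega^{-i}$ with $\gamma'=g^r(\gamma)$, so the shift parameter $r\in\F_q$ realizes $g^r$. For the scaling $s\in\F_q^*$: if $\mathcal Q(\Theta(\gamma_1))$ and $\mathcal Q(\Theta(\gamma_2))$ are isotopic via $(s,r)$, then combining with the shift formula gives $u_i(\gamma_1)=u_{si}(g^r(\gamma_2))$ for all $i\in\F_q$. Taking $i=\pm 1$ and adding yields $\omega^s+\omega^{-s}=\omega+\omega^{-1}$, which forces $s\equiv\pm 1\pmod q$ because $x+x^{-1}=\omega+\omega^{-1}$ has only $\omega$ and $\omega^{-1}$ as roots. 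Subtracting the two equations then gives $\gamma_1=g^r(\gamma_2)$ when $s=1$ and $\gamma_1=1-g^r(\gamma_2)=fg^r(\gamma_2)$ when $s=-1$. Conversely, each element of $\langle f,g\rangle$ arises from a genuine isotopism (via the shift above, and via Theorem \ref{Th:MainKNV} for $f$), so the isotopism classes inside $\Gamma_{p,q}$ are precisely the $\langle f,g\rangle$-orbits.

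The relation $fgf=g^{-1}$ follows by direct computation, and $g$ has order $q$ (since $g(\gamma)=\gamma$ forces $\omega^2=1$), so $\langle f,g\rangle\cong D_{2q}$. I would finish by Burnside's lemma in two subcases. When $q\mid p-1$, $|\Gamma_{p,q}|=p-q+1$ (removing the $q-1$ values with $\gamma^{-1}\in\{1-\omega^i:i\ne 0\}$); the only fixed points of a nontrivial rotation are $\{0,1\}\subseteq\Gamma_{p,q}$, and each reflection $fg^k$ has the unique fixed point $(1+\omega^k)^{-1}$, which lies in $\Gamma_{p,q}$ because $-\omega^k\notin\langle\omega\rangle$. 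Burnside gives $(p+2q-1)/(2q)$ non-cyclic orbits. When $q\mid p+1$, the ambient set $1/2+\F_p\sqrt t$ contains neither $0$ nor $1$, so the rotations have no fixed points; the reflection fixed points $(1+\omega^k)^{-1}$ still lie in $\Gamma_{p,q}$ by the Frobenius trace identity $(1+\omega^k)^{-1}+(1+\omega^{-k})^{-1}=1$, and the same identity applied to $1-\omega^i$ gives $|\Gamma_{p,q}|=p-q+1$ again. Burnside then yields $(p+1)/(2q)$ non-cyclic orbits. Adding one for the cyclic group, both cases simplify to the desired $\lfloor(p-1+4q)/(2q)\rfloor$.

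The main obstacle I anticipate is the case $q\mid p+1$: the parameter $\gamma$ lives in an affine line of $\F_{p^2}$ rather than in $\F_p$, and both $|\Gamma_{p,q}|$ and the location of the reflection fixed points inside $\Gamma_{p,q}$ must be deduced from the Frobenius trace identity. By contrast, the structural step --- reducing $(s,r)$-isotopisms to the $D_{2q}$-action via $s\equiv\pm 1\pmod q$ --- is comparatively clean.
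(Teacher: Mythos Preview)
Your proposal is correct and follows essentially the same route as the paper: reduce the isotopism relation on $\Gamma_{p,q}$ to the action of $\langle f,g\rangle\cong D_{2q}$ (via the shift $r\mapsto g^r$ and the observation that $s\equiv\pm 1$), then count orbits in the two cases $q\mid p-1$ and $q\mid p+1$. The only cosmetic differences are that you invoke Burnside's lemma where the paper counts orbits directly via orbit--stabilizer, and you verify directly that the reflection fixed points $(1+\omega^k)^{-1}$ lie in $\Gamma_{p,q}$, whereas the paper deduces this indirectly from the fact that the $q$-element orbit of $1/2$ must supply a fixed point for each reflection.
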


\begin{remark}
Let $p>3$ be a prime. By Theorem \ref{Th:Main}, the number $N_{3p}$ of Bol loops of order $3p$ up to isotopism is equal to $\lfloor(p+11)/6\rfloor$, confirming \cite[Conjecture 7.3]{KNV}. It was shown already in \cite[p. 255]{NR1} that $N_{3p}\ge \lceil(p+5)/6\rceil$, a remarkably good estimate. Note that
\begin{displaymath}
    \left\lfloor\frac{p+11}{6}\right\rfloor - \left\lceil\frac{p+5}{6}\right\rceil = \left\{\begin{array}{ll}
        0,&\text{if $p=6k+5$},\\
        1,&\text{if $p=6k+1$}.
    \end{array}\right.
\end{displaymath}
\end{remark}

\section{Proof of the main result}

For the rest of the paper assume that $p>q$ are odd primes, $q$ divides $p^2-1$, $\omega$ is a primitive $q$th root of unity in $\F_{p^2}$ and write $\F_{p^2} =\setof{u+v\sqrt t}{u,v\in\F_p}$ for some non-square $t\in\F_p$.

Let $\mathcal X_{p,q}$ be the set of all bi-infinite $q$-periodic sequences with entries in $\F_{p^2}$. As explained in \cite{KNV}, $u\in\mathcal X_{p,q}$ solves the recurrence relation \eqref{Eq:Recurrence} if and only if $Au=\lambda u$, where $A$ is the $q\times q$ circulant matrix whose first row is equal to $(0,1,0,\dots,0,1)$ and where we identify $u$ with the vector $(u_0,\dots,u_{q-1})^T$. General theory of circulant matrices applies and yields:

\begin{lemma}\cite{KNV}\label{Lm:Circulant}
Let $A$ be the $q\times q$ circulant matrix whose first row is equal to $(0,1,0,\dots,0,1)$. For $0\le j<q$, let
\begin{equation}\label{Eq:Eigen}
    \lambda_j = \omega^j+\omega^{-j}\quad\text{and}\quad e_j = (1,\omega^j,\omega^{2j},\dots,\omega^{(q-1)j})^T.
\end{equation}
Then:
\begin{enumerate}
\item[(i)] For every $0\le j<q$, $\lambda_j$ is an element of the prime field of $\F_{p^2}$.
\item[(ii)] For every $0\le j<q$, $\lambda_j$ is an eigenvalue of $A$ over $\F_{p^2}$ with eigenvector $e_j$.
\item[(iii)] For $0< j\le (q-1)/2$, the eigenvectors $e_j$, $e_{-j}$ are linearly independent.
\item[(iv)] For $0\le j<k<q$, $\lambda_j=\lambda_k$ if and only if $j+k\equiv 0\pmod q$. In particular, $\lambda_0=2$ has multiplicity $1$, and every $\lambda_j$ with $1\le j\le (q-1)/2$ has multiplicity $2$.
\end{enumerate}
\end{lemma}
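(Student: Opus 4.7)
The plan is to verify (i) by a Frobenius argument on $\F_{p^2}/\F_p$ and to obtain (ii)--(iv) by direct computations with the circulant action. All four assertions are essentially standard once one remembers that $q\mid p^2-1$ places $\omega$ inside $\F_{p^2}$ with $\omega^p\in\{\omega,\omega^{-1}\}$.

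For (i), the Frobenius automorphism $\phi:x\mapsto x^p$ of $\F_{p^2}/\F_p$ must send $\omega$ to another primitive $q$th root of unity in $\F_{p^2}$. Since $\omega^{p^2}=\omega$, the image $\omega^p$ is a $q$th root of unity with $(\omega^p)^p=\omega$, which forces $p^2\equiv 1\pmod q$ (already assumed) and therefore $p\equiv\pm 1\pmod q$. Either way, $\phi(\lambda_j)=\omega^{\pm j}+\omega^{\mp j}=\lambda_j$, so $\lambda_j$ lies in the fixed field $\F_p$.

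For (ii), the first row of $A$ encodes the rule $(Au)_i=u_{i+1}+u_{i-1}$ with indices taken modulo $q$; substituting the coordinates of $e_j$ yields $(Ae_j)_i=(\omega^j+\omega^{-j})\omega^{ij}=\lambda_j(e_j)_i$ in a single line. For (iii), comparing the first two coordinates of a relation $ae_j+be_{-j}=0$ reduces to asking whether $\omega^{2j}=1$, which is impossible for $0<2j<q$ and $\omega$ primitive of order $q$. For (iv), the identity $\omega^j+\omega^{-j}=\omega^k+\omega^{-k}$ factors as $(\omega^j-\omega^k)(1-\omega^{-(j+k)})=0$, and under $0\le j<k<q$ only the second factor can vanish, giving $q\mid j+k$. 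The multiplicity statement then follows by pairing each $j\in\{1,\dots,(q-1)/2\}$ with $q-j$, noting that $\lambda_0=2$ is attained only at $j=0$.

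I do not anticipate a genuine obstacle. The content lies entirely in (i), where one must remember that the hypothesis $q\mid p^2-1$ (rather than merely $\omega\in\overline{\F_p}$) is exactly what forces $\omega^p=\omega^{\pm 1}$ and hence $\lambda_j\in\F_p$. Parts (ii)--(iv) are standard circulant-matrix facts, valid verbatim over any field in which the relevant roots of unity exist, and it is (iv) combined with (iii) that will govern the dimension-counting needed later to parametrise Bol loops.
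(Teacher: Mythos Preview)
Your proof is correct in all four parts; the Frobenius argument for (i), the one-line circulant computation for (ii), the two-coordinate check for (iii), and the factorisation $(\omega^j-\omega^k)(1-\omega^{-(j+k)})=\lambda_j-\lambda_k$ for (iv) are all sound. Note that the paper does not actually prove this lemma: it is quoted verbatim from \cite{KNV} (as indicated by the citation attached to the lemma header), so there is no in-paper argument to compare against, and your self-contained verification simply fills in what the present paper imports as a black box.
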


Let $\lambda_j$ and $e_j$ be as in \eqref{Eq:Eigen}. In order to better understand which elements of $\mathcal X_{p,q}$ yield Bol loops, let us define the following subsets:
\begin{align*}
    \mathcal X_{p,q}^*  &= \setof{u\in\mathcal X_{p,q}}{u_0=1},  & &\\
    \mathcal A_{p,q}^j  &= \setof{u\in\mathcal X_{p,q}^*}{Au=\lambda_j u}, &\mathcal A_{p,q}    &= \bigcup_{0\le j<q}\mathcal A_{p,q}^j,\\
    \mathcal B_{p,q}^j  &= \setof{u\in\mathcal A_{p,q}^j}{u_i^{-1}u_k\in\F_p^*\setminus\{-1\}\text{ for every }i,k},  &\mathcal B_{p,q}    &= \bigcup_{0\le j<q}\mathcal B_{p,q}^j.
\end{align*}
By Theorem \ref{Th:Summary}, the elements of $\mathcal B_{p,q}$ are precisely the sequences that yield Bol loops.

\begin{lemma}\label{Lm:ugamma}
For every $j\in\F_q$, $\mathcal A_{p,q}^j = \setof{\gamma e_j + (1-\gamma)e_{-j}}{\gamma\in\F_{p^2}}$. In particular, the only element of $\mathcal A_{p,q}^0=\mathcal B_{p,q}^0$ is the all-$1$ sequence.
\end{lemma}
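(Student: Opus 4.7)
The plan is to reduce the statement to a direct linear-algebra computation using the eigenstructure of $A$ recorded in Lemma \ref{Lm:Circulant}. Regard $\mathcal X_{p,q}$ as a $q$-dimensional vector space over $\F_{p^2}$, so that $\mathcal A_{p,q}^j$ is the intersection of the $\lambda_j$-eigenspace $E_j\subseteq\mathcal X_{p,q}$ with the affine hyperplane $\{u: u_0=1\}$.

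First I would handle the case $0<j<q$. By Lemma \ref{Lm:Circulant}(iv) we have $\lambda_j=\lambda_{q-j}$ and $\lambda_j$ has algebraic multiplicity $2$, while by (ii) both $e_j$ and $e_{-j}$ lie in $E_j$, and by (iii) they are linearly independent. Since $A$ is a circulant matrix over the splitting field $\F_{p^2}$, it is diagonalizable (the full collection $e_0,\dots,e_{q-1}$ is a basis by a Vandermonde argument), so geometric multiplicity equals algebraic multiplicity, and consequently $E_j = \F_{p^2}e_j\oplus\F_{p^2}e_{-j}$. Writing an arbitrary element of $E_j$ as $u=\alpha e_j+\beta e_{-j}$ and noting that the first coordinate of $e_{\pm j}$ equals $\omega^0=1$, the constraint $u_0=1$ becomes $\alpha+\beta=1$. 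Setting $\gamma=\alpha$ yields the desired parametrization.

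Next I would handle $j=0$. Here $e_0=e_{-0}=(1,1,\dots,1)^T$ and by Lemma \ref{Lm:Circulant}(iv) the eigenvalue $\lambda_0=2$ has multiplicity $1$, so $E_0=\F_{p^2}e_0$. The formula $\gamma e_0+(1-\gamma)e_0=e_0$ is independent of $\gamma$, giving $\mathcal A_{p,q}^0=\{e_0\}$ in agreement with the stated parametrization. For the ``in particular'' clause, it remains to verify that the all-$1$ sequence lies in $\mathcal B_{p,q}^0$; but $u_i^{-1}u_k=1$ for all $i,k$, and $1\in\F_p^*\setminus\{-1\}$ since $p$ is odd, so $e_0\in\mathcal B_{p,q}^0\subseteq\mathcal A_{p,q}^0$ and equality holds.

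I do not anticipate any serious obstacle: once Lemma \ref{Lm:Circulant} is invoked, the only substantive observation is that each $e_j$ has first coordinate $1$, which is precisely what allows the normalization $u_0=1$ to be encoded as the affine combination $\gamma e_j+(1-\gamma)e_{-j}$. The mild subtlety worth double-checking is the diagonalizability of $A$ over $\F_{p^2}$, which follows from the fact that $\F_{p^2}$ contains all $q$-th roots of unity (since $q\mid p^2-1$) and the eigenvectors $e_0,\dots,e_{q-1}$ form a Vandermonde basis.
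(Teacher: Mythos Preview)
Your proposal is correct and follows essentially the same approach as the paper: both arguments invoke Lemma~\ref{Lm:Circulant} to identify the $\lambda_j$-eigenspace as the span of $e_j$ and $e_{-j}$, and then impose $u_0=1$ to obtain the affine parametrization $\gamma+\delta=1$. Your version is in fact more careful than the paper's, since you explicitly justify that $A$ is diagonalizable over $\F_{p^2}$ via the Vandermonde argument (the paper simply asserts the eigenspace decomposition), and you separately verify the $j=0$ case and the membership $e_0\in\mathcal B_{p,q}^0$, both of which the paper leaves implicit.
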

\begin{proof}
Let $u\in \mathcal A_{p,q}^j$. By Lemma \ref{Lm:Circulant}, $u = \gamma e_j + \delta e_{-j}$ for some $\gamma$, $\delta\in\F_{p^2}$. The condition $u_0=1$ forces $\gamma+\delta=1$.
\end{proof}

Let $u$ be the unique element of $\mathcal B_{p,q}^0$, the all-$1$ sequence. Then $\theta_i=u_i^{-1}=1$ for every $i$, and the multiplication formula \eqref{Eq:MultLinear} becomes $(i,j)(k,\ell) = (i+k,j+\ell)$, the direct product $\Z_q\times \Z_p \cong \Z_{pq}$.

Consider the following binary relations on $\mathcal X_{p,q}^*$:
\begin{itemize}
\item $u\ism v$ if there is $s\in\F_q^*$ such that $u_i=v_{si}$ for every $i$,
\item $u\aux v$ if there is $r\in\F_q$ such that $u_i=v_r^{-1}v_{i+r}$ for every $i$, and
\item $u\ist v$ if there are $s\in\F_q^*$ and $r\in\F_q$ such that $u_i=v_r^{-1}v_{si+r}$ for every $i$.
\end{itemize}
We recognize $\ism$ as the isomorphism relation and $\ist$ as the isotopism relation from Theorem \ref{Th:Summary}.

\begin{lemma}\label{Lm:ChangeEigenvalue}
If $u\in\mathcal A_{p,q}^j$ and $v\equiv u$ via $v_i=u_r^{-1}u_{si+r}$ then $v\in\mathcal A_{p,q}^{sj}$. Conversely, if $u\in\mathcal A_{p,q}^j$ for some $j\in\F_q^*$ then for every $k\in\F_q^*$ there is $v\in\mathcal A_{p,q}^k$ such that $v\equiv u$.
\end{lemma}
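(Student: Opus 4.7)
The plan is to pass to the explicit parametrization of $\mathcal A_{p,q}^j$ provided by Lemma \ref{Lm:ugamma} and then verify each claim by a direct substitution.

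For the first assertion, I would write $u = \gamma e_j + (1-\gamma) e_{-j}$ for some $\gamma \in \F_{p^2}$, so that $u_n = \gamma \omega^{nj} + (1-\gamma)\omega^{-nj}$. Substituting into the formula $v_i = u_r^{-1} u_{si+r}$ and factoring out the common $\omega^{\pm rj}$ exhibits $v$ as a linear combination
\[
v_i = \bigl(u_r^{-1}\gamma\omega^{rj}\bigr)\omega^{(sj)i} + \bigl(u_r^{-1}(1-\gamma)\omega^{-rj}\bigr)\omega^{-(sj)i} = \alpha(e_{sj})_i + \beta(e_{-sj})_i.
\]
The constraint $v_0 = u_r^{-1} u_r = 1$ forces $\alpha + \beta = 1$, and Lemma \ref{Lm:ugamma} then identifies $v$ with an element of $\mathcal A_{p,q}^{sj}$.

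For the converse, given $u \in \mathcal A_{p,q}^j$ with $j\in\F_q^*$ and any target $k\in\F_q^*$, note that $s := kj^{-1}$ lies in $\F_q^*$ because $j$ is invertible modulo $q$. Take $r = 0$ and define $v_i = u_{si}$; this is a legitimate instance of the $\ist$ relation since $u_0 = 1$. The first part then yields $v \in \mathcal A_{p,q}^{sj} = \mathcal A_{p,q}^{k}$, as required.

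I do not anticipate any real obstacle: once one has Lemma \ref{Lm:ugamma}, the argument is bookkeeping in the $e_j, e_{-j}$ eigenbasis. The two components of $u$ transform independently under the isotopism operation, the index rescaling by $s$ sends the eigenvalue $\lambda_j$ to $\lambda_{sj}$, and the shift by $r$ only rescales the coefficients. The only mildly subtle point is to check that $u_r \ne 0$ whenever the expression $u_r^{-1} u_{si+r}$ is formed, but this is built into the definition of $\ist$.
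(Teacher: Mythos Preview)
Your proposal is correct and follows essentially the same route as the paper: both proofs pass through the parametrization of Lemma~\ref{Lm:ugamma}, expand $u_{si+r}$ in the $e_j,e_{-j}$ basis, and read off that $v$ lies in the $\lambda_{sj}$-eigenspace, then handle the converse by solving $sj=k$ for $s$. Your bookkeeping is in fact slightly more precise than the paper's---you correctly record the new coefficients as $\alpha=u_r^{-1}\gamma\omega^{rj}$ and $\beta=u_r^{-1}(1-\gamma)\omega^{-rj}$, whereas the paper's phrase ``by linearity, $v=\gamma e_{sj}+(1-\gamma)e_{-sj}$'' glosses over the fact that the transformation $w\mapsto w_r^{-1}w_{si+r}$ is not linear and the coefficients do change---but this does not affect the conclusion or the method.
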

\begin{proof}
Suppose that $u\in\mathcal A_{p,q}^j$ and $v_i=u_r^{-1}u_{si+r}$. Note that $v_0=u_r^{-1}u_r = 1$. By Lemmas \ref{Lm:Circulant} and \ref{Lm:ugamma}, we have $u = \gamma e_j+(1-\gamma)e_{-j}$ for some $\gamma\in\F_{p^2}$. Let $f_i = e_{j,r}^{-1}e_{j,si+r}$. Then $f_i = \omega^{-jr}\omega^{j(si+r)} = \omega^{jsi} = e_{sj,i}$. By linearity, $v = \gamma e_{sj}+(1-\gamma)e_{-sj}$. By Lemma \ref{Lm:Circulant}, $v\in\mathcal A_{p,q}^{sj}$.

For the converse, suppose that $j\in\F_q^*$ and let $s\in\F_q^*$ be such that $sj=k$. Set $v_i=u_r^{-1}u_{si+r}$ for some $r\in\F_q$. Then certainly $v\equiv u$ and we have $v\in\mathcal A_{p,q}^k$ by the first part.
\end{proof}

\begin{lemma}\label{Lm:Equivalences}
The following statements hold:
\begin{enumerate}
\item[(i)] $\ism$, $\aux$ and $\ist$ are equivalence relations on $\mathcal X_{p,q}^*$, and $\ist$ is the transitive closure of $\ism$ and $\aux$.
\item[(ii)] $\mathcal B_{p,q}$ is the union of some equivalence classes of each of $\ism$, $\aux$ and $\ist$.
\item[(iii)] If $u\in\mathcal B_{p,q}^1$ and $v_i=u_r^{-1}u_{si+r}$ then $v\in\mathcal B_{p,q}^1$ if and only if $s=\pm 1$.
\item[(iv)] $\mathcal B_{p,q}^1$ is the union of some equivalence classes of $\aux$.
\end{enumerate}
\end{lemma}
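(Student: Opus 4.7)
The plan is to handle the four parts in order, leaning on Lemma \ref{Lm:ChangeEigenvalue} for the parts involving $\mathcal B_{p,q}$ and $\mathcal B_{p,q}^1$.

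For (i), I would first recognize $\ism$ as the action of the multiplicative group $\F_q^*$ on $\mathcal X_{p,q}^*$ by index rescaling, and $\aux$ as the action of the additive group $\F_q$ via the shift-and-normalize map $(r\cdot u)_i = u_r^{-1}u_{i+r}$; the group-action axioms in each case follow from short direct calculations using $u_0=1$ and the $q$-periodicity, so both $\ism$ and $\aux$ are equivalence relations. The key observation for $\ist$ is that the formula $v_i = u_r^{-1}u_{si+r}$ decomposes as ``first apply $\aux$ with parameter $r$, then apply $\ism$ with parameter $s$'', so every $\ist$-step is already the composition of one $\aux$-step with one $\ism$-step; this gives one inclusion in the transitive-closure statement. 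To close the other direction I would verify directly that $\ist$ itself is an equivalence relation: reflexivity via $(s,r)=(1,0)$; symmetry by solving the defining formula for $v$ in terms of $u$, which yields the inverse pair $(s^{-1},-r/s)$; transitivity by a single substitution giving a composition law on parameter pairs. Since $\ist$ is then an equivalence relation containing both $\ism$ and $\aux$, it equals their transitive closure.

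For (ii), Lemma \ref{Lm:ChangeEigenvalue} shows that the three actions carry $\mathcal A_{p,q}$ into itself (possibly changing the eigenvalue index), while the extra condition $u_i^{-1}u_k\in\F_p^*\setminus\{-1\}$ defining $\mathcal B_{p,q}$ inside $\mathcal A_{p,q}$ is preserved for free, since $v_i^{-1}v_k=u_{si+r}^{-1}u_{sk+r}$ is of exactly the same form.

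For (iii), which is where the eigenvalue structure does the work, given $u\in\mathcal B_{p,q}^1\subseteq\mathcal A_{p,q}^1$ and $v_i=u_r^{-1}u_{si+r}$, Lemma \ref{Lm:ChangeEigenvalue} places $v$ in $\mathcal A_{p,q}^s$, and by Lemma \ref{Lm:Circulant}(iv) we have $\mathcal A_{p,q}^s=\mathcal A_{p,q}^1$ if and only if $\lambda_s=\lambda_1$, which holds exactly when $s\equiv\pm 1\pmod q$. Combined with (ii) this yields $v\in\mathcal B_{p,q}^1$ iff $s=\pm 1$, and (iv) is then the special case $s=1$ of (iii). I do not anticipate a serious obstacle; the one place calling for a little care is the symmetry of $\ist$ in (i), where the inverse parameters have to be extracted from the defining formula, but even that reduces to a short direct calculation.
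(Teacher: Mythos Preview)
Your proposal is correct and follows essentially the same route as the paper's proof: direct verification that $\ist$ is an equivalence relation via explicit parameter formulas, the decomposition of each $\ist$-step as an $\aux$-step followed by a $\ism$-step for the transitive-closure claim, Lemma~\ref{Lm:ChangeEigenvalue} together with the ratio calculation $v_i^{-1}v_k=u_{si+r}^{-1}u_{sk+r}$ for (ii), and the eigenvalue comparison $\lambda_s=\lambda_1\iff s\equiv\pm1$ from Lemma~\ref{Lm:Circulant}(iv) for (iii), with (iv) as the case $s=1$. Your group-action phrasing for $\ism$ and $\aux$ in (i) is a mild cosmetic difference, not a different argument.
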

\begin{proof}
(i) Note that $\ism$ is contained in $\ist$ (set $r=0$ and use $v_0=1$) and $\aux$ is contained in $\ist$ (set $s=1$). We show that $\ist$ is an equivalence relation, the other two cases being similar. We have $u\ist u$ with $r=0$, $s=1$. If $u_i=v_r^{-1}v_{si+r}$ then $u_{-s^{-1}r}^{-1}u_{s^{-1}i-s^{-1}r} = (v_r^{-1}v_{s(-s^{-1}r)+r})^{-1}v_r^{-1}v_{s(s^{-1}i-s^{-1}r)+r} = v_i$, proving symmetry. If $u_i=v_r^{-1}v_{si+r}$ and $v_i=w_a^{-1}w_{bi+a}$ then $u_i = (w_a^{-1}w_{br+a})^{-1}w_a^{-1}w_{b(si+r)+a} = w_{br+a}^{-1}w_{(bs)i+(br+a)}$, proving transitivity. For the transitive closure, if $u_i=w_r^{-1}w_{si+r}$, set $v_i = w_r^{-1}w_{i+r}$ and note that $u_i=v_{si}$.

(ii) Suppose that $u\ist v$, $u_i=v_r^{-1}v_{si+r}$. By Lemma \ref{Lm:ChangeEigenvalue}, if $u\in\mathcal A_{p,q}$ then $v\in\mathcal A_{p,q}$. If $u_i^{-1}u_j\in\F_p^*\setminus\{-1\}$ for every $i$, $j$, then $v_{si+r}^{-1}v_{sj+r} = (v_r^{-1}v_{si+r})^{-1}v_r^{-1}v_{sj+r} = u_i^{-1}u_j\in \F_p^*\setminus\{-1\}$ for every $i$, $j$, and we are done since $(i,j)\mapsto (si+r,sj+r)$ is a bijection of $\F_q\times\F_q$.

Part (iii) follows from (ii) and Lemma \ref{Lm:ChangeEigenvalue}. Part (iv) is then immediate.
\end{proof}

Let $j\in\F_q^*$. By Lemmas \ref{Lm:ChangeEigenvalue} and \ref{Lm:Equivalences}, for any $u\in\mathcal B_{p,q}^j$ there is $v\in\mathcal B_{p,q}^1$ such that $u\ist v$, and there is no $w\in\mathcal B_{p,q}^0$ such that $u\ist w$. For the isotopism problem, it therefore remains to study the restriction of $\ist$ onto $\mathcal B_{p,q}^1$, taking parts (iii) and (iv) of Lemma \ref{Lm:Equivalences} into account.

Every element of $\mathcal B_{p,q}^1$ is by definition an element of $\mathcal A_{p,q}^1$ and hence is of the form
\begin{displaymath}
    u(\gamma) = \gamma e_1 + (1-\gamma)e_{-1}
\end{displaymath}
for some $\gamma\in\F_{p^2}$, by Lemma \ref{Lm:ugamma}. The mapping $\gamma\mapsto u(\gamma)$ is a bijection. Indeed, if $u(\gamma)=u(\delta)$ then $\gamma\omega+(1-\gamma)\omega^{-1} = u(\gamma)_1 = u(\delta)_1 = \delta\omega + (1-\delta)\omega^{-1}$, hence $(\gamma-\delta)\omega = (\gamma-\delta)\omega^{-1}$ and $\gamma=\delta$ follows. It was shown in \cite[Section 6]{KNV} that
\begin{displaymath}
    \mathcal B_{p,q}^1 = \setof{u(\gamma)}{\gamma\in\Gamma_{p,q}},
\end{displaymath}
where $\Gamma_{p,q}$ is as in Theorem \ref{Th:MainKNV}. Moreover, by \cite[Lemma 6.8]{KNV}, $\Gamma_{p,q}$ is a set of cardinality $p-q+1$, it is closed under the map $\gamma\mapsto 1-\gamma$, and it always contains $1/2$.

Let $u=u(\gamma)\in\mathcal B_{p,q}^1$ and consider $v_i=u_{-i}$. Since $u(\gamma)_i = u(1-\gamma)_{-i}$, we have $v=u(1-\gamma)\in\mathcal B_{p,q}^1$. The non-cyclic Bol loops of order $pq$ up to isomorphism therefore correspond to the orbits of the group $\langle f\rangle$ acting on $\Gamma_{p,q}$, where
\begin{displaymath}
    \gamma f  = 1-\gamma.
\end{displaymath}

At this point we can recover Theorem \ref{Th:MainKNV}. The cyclic group of order $pq$ corresponds to the unique sequence of $\mathcal B_{p,q}^0$. The above action has a unique fixed point on $\Gamma_{p,q}$, namely $\gamma=1/2$, and all other orbits have size $2$. The fixed point $\gamma=1/2$ yields a Bruck loop by Theorem \ref{Th:Summary}. Since $|\Gamma_{p,q}|=p-q+1$, there are additional $(p-q)/2$ Bol loops, for the total of $1+1+(p-q)/2 = (p-q+4)/2$ Bol loops of order $pq$. If $q$ divides $p-1$, the nonabelian group of order $pq$ must be among these $pq$ loops. It is easy to check that it is the loop corresponding to $\gamma=1$.

To further classify Bol loops of order $pq$ up to isotopism, we must now also consider the equivalence classes of $\aux$ on $\mathcal B_{p,q}^1$.

\begin{lemma}\label{Lm:gAction}
Let $\gamma$, $\delta\in\Gamma_{p,q}$. Then $u(\gamma)\aux u(\delta)$ if and only if
\begin{equation}\label{Eq:Gamma}
    \gamma = \frac{\delta\omega^r}{\delta\omega^r + (1-\delta)\omega^{-r}}
\end{equation}
for some $r\in\F_q$.
\end{lemma}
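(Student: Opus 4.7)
By definition, $u(\gamma) \aux u(\delta)$ means there exists $r \in \F_q$ such that $u(\gamma)_i = u(\delta)_r^{-1} u(\delta)_{i+r}$ for every $i$. My approach is to fix such an $r$, put the sequence $v_i = u(\delta)_r^{-1} u(\delta)_{i+r}$ into closed form using $u(\delta)_i = \delta\omega^i + (1-\delta)\omega^{-i}$, and then identify the unique $\gamma'$ for which $v = u(\gamma')$.

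I would first observe that $u(\delta)_r$ is invertible: since $u(\delta) \in \mathcal B_{p,q}^1$, the membership condition (with $i=0$, $k=r$) gives $u(\delta)_r = u(\delta)_0^{-1} u(\delta)_r \in \F_p^* \setminus \{-1\}$, so the quotient defining $v$ makes sense. Applying Lemma \ref{Lm:ChangeEigenvalue} with $s = 1$ places $v$ in $\mathcal A_{p,q}^1$, and Lemma \ref{Lm:ugamma} then guarantees $v = u(\gamma')$ for a unique $\gamma' \in \F_{p^2}$. It remains to pin down $\gamma'$.

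To do this I split the numerator as
\[
u(\delta)_{i+r} = (\delta\omega^r)\,\omega^i + ((1-\delta)\omega^{-r})\,\omega^{-i},
\]
divide by $u(\delta)_r = \delta\omega^r + (1-\delta)\omega^{-r}$, and note that the two resulting coefficients of $\omega^i$ and $\omega^{-i}$ sum to $1$. Matching against $u(\gamma')_i = \gamma'\omega^i + (1-\gamma')\omega^{-i}$ immediately yields $\gamma' = \delta\omega^r / (\delta\omega^r + (1-\delta)\omega^{-r})$, and bijectivity of the map $\gamma \mapsto u(\gamma)$ turns $u(\gamma) = v = u(\gamma')$ into $\gamma = \gamma'$, which is exactly \eqref{Eq:Gamma}. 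The converse runs the same calculation in reverse: given \eqref{Eq:Gamma}, the sequence $v_i := u(\delta)_r^{-1} u(\delta)_{i+r}$ coincides termwise with $u(\gamma)$. There is no real obstacle in the argument; the only subtlety worth flagging is the invertibility of $u(\delta)_r$, which is free from $u(\delta) \in \mathcal B_{p,q}^1$.
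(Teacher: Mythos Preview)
Your proof is correct and follows essentially the same route as the paper: both unwrap the definition of $\aux$ and compute directly. Your execution is slightly cleaner, however: rather than substituting $i=r$ into \eqref{Eq:GammaDelta} and solving for $\gamma$ (which forces the paper to treat $r=0$ separately, since it divides by $\omega^r-\omega^{-r}$), you factor $u(\delta)_{i+r}=(\delta\omega^r)\omega^i+((1-\delta)\omega^{-r})\omega^{-i}$ and read off the coefficient of $\omega^i$ directly, handling all $r$ at once. The invocation of Lemmas~\ref{Lm:ChangeEigenvalue} and~\ref{Lm:ugamma} is harmless but not strictly needed, since your explicit decomposition already exhibits $v$ in the form $u(\gamma')$; the only fact you actually use is the injectivity of $\gamma\mapsto u(\gamma)$, which the paper establishes just before the lemma.
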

\begin{proof}
By definition, $u(\gamma)\aux u(\delta)$ if an only if there is $r\in\F_q$ such that
\begin{equation}\label{Eq:GammaDelta}
    \gamma\omega^i+(1-\gamma)\omega^{-i} = u(\gamma)_i = u(\delta)_r^{-1}u(\delta)_{i+r} = \frac{\delta\omega^{i+r}+(1-\delta)\omega^{-i-r}}{\delta\omega^r+(1-\delta)\omega^{-r}}
\end{equation}
for every $i\in\F_q$.

Suppose that \eqref{Eq:GammaDelta} holds. If $r=0$ then $u(\gamma)=u(\delta)$ and hence $\gamma=\delta$, which agrees with \eqref{Eq:Gamma}. Suppose that $r\ne 0$. Substituting $i=r$ into \eqref{Eq:GammaDelta} yields
\begin{displaymath}
    \gamma\omega^r + (1-\gamma)\omega^{-r} = \frac{\delta\omega^{2r}+(1-\delta)\omega^{-2r}}{\delta\omega^r+(1-\delta)\omega^{-r}},
\end{displaymath}
and therefore
\begin{displaymath}
    \gamma = \frac{(\delta\omega^{2r}+(1-\delta)\omega^{-2r})(\delta\omega^r+(1-\delta)\omega^{-r})^{-1}-\omega^{-r}}{\omega^r-\omega^{-r}}.
\end{displaymath}
A straightforward computation now shows that $\gamma$ is as in \eqref{Eq:Gamma}.

Conversely, suppose that $\gamma$ is as in \eqref{Eq:Gamma}. Then another straightforward calculation shows that \eqref{Eq:GammaDelta} holds for every $i$, and thus $u(\gamma)\aux u(\delta)$.
\end{proof}

For $r\in\F_q$, consider the mapping $g_r:\Gamma_{p,q}\to\Gamma_{p,q}$ defined  by
\begin{displaymath}
    \gamma g_r = \frac{\gamma\omega^r}{\gamma\omega^r + (1-\gamma)\omega^{-r}}.
\end{displaymath}
We note that $g_r$ is well-defined since $\gamma\omega^r+(1-\gamma)\omega^{-r} = u(\gamma)_r\ne 0$. By Lemma \ref{Lm:gAction}, if $\gamma = \delta g_r$ then $u(\gamma)\aux u(\delta)$, so $u(\delta)\in\mathcal B_{p,q}^1$ by Lemma \ref{Lm:Equivalences}(iv), which in turn implies $\delta\in\Gamma_{p,q}$. Altogether, $g_r$ is a bijection on $\Gamma_{p,q}$.

Yet another straightforward calculation shows that $\gamma g_rg_s = \gamma g_{r+s}$ for every $r$, $s\in\F_q$. Let $g=g_1$, that is,
\begin{displaymath}
    \gamma g = \frac{\gamma\omega}{\gamma\omega + (1-\gamma)\omega^{-1}}.
\end{displaymath}
Combining our results obtained so far, we see that $u(\gamma)\aux u(\delta)$ if and only if $\gamma$, $\delta$ are in the same orbit of the group $\langle g\rangle$ acting on $\Gamma_{p,q}$, and $u(\gamma)\ist u(\delta)$ if and only if $\gamma$, $\delta$ are in the same orbit of the group $G = \langle f,g\rangle$ acting on $\Gamma_{p,q}$.

\begin{proposition}\label{Pr:Group}
The group $G=\langle f,g\rangle$ is isomorphic to the dihedral group $D_{2q}$ of order $2q$. Moreover:
\begin{enumerate}
\item[(i)] The only fixed point of $f$ is $1/2$. If $q$ divides $p-1$ then $f(0)=1$ and $f(1)=0$.
\item[(ii)] If $0<i<q$ and $q$ divides $p-1$ then the only fixed points of $g^i$ are $0$ and $1$.
\item[(iii)] If $0<i<q$ and $q$ divides $p+1$ then $g^i$ has no fixed points.
\item[(iv)] If $0<i<q$ then the only fixed point of $fg^i$ is $(1+\omega^i)^{-1}$.
\end{enumerate}
\end{proposition}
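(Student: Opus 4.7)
The plan is to verify the defining relations of $D_{2q}$ in the group $G=\langle f,g\rangle$ and then carry out the four fixed-point analyses directly from the explicit formulas for $f$, $g$, and $\Gamma_{p,q}$.

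For the group structure, I would first observe that $f^2=\mathrm{id}$ is manifest. The composition identity $\gamma g_rg_s=\gamma g_{r+s}$ established in the discussion preceding the proposition shows that $g=g_1$ has order dividing $q$; to rule out order $1$, I would check that $g^i$ does not fix $1/2$ for $0<i<q$, since $(1/2)\cdot g^i=\omega^i/(\omega^i+\omega^{-i})$ equals $1/2$ only when $\omega^{2i}=1$, which is impossible as $q$ is odd. Next, a direct tracking of $\gamma$ through $f,g,f$ yields $\gamma\omega^{-1}/((1-\gamma)\omega+\gamma\omega^{-1})$, matching $\gamma g^{-1}$, so $fgf=g^{-1}$. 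Since $f$ fixes $1/2$ but no nontrivial power of $g$ does, $f\notin\langle g\rangle$; hence $|G|=2q$ and $G\cong D_{2q}$.

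For (i)--(iii), I would solve the fixed-point equations in $\F_{p^2}$ and then test membership in $\Gamma_{p,q}$. Part (i) is immediate: $\gamma=1-\gamma$ forces $\gamma=1/2$, and when $q\mid p-1$ the definition of $\Gamma_{p,q}$ explicitly includes $0$ and, since $1-1^{-1}=0\notin\langle\omega\rangle$, also $1$. For (ii) and (iii), the equation $\gamma=\gamma\omega^i/(\gamma\omega^i+(1-\gamma)\omega^{-i})$ for $0<i<q$ rearranges either to $\gamma=0$ or, after cancellation, to $(1-\gamma)(\omega^{2i}-1)=0$, so the only fixed points in $\F_{p^2}$ are $0$ and $1$. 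When $q\mid p-1$ both lie in $\Gamma_{p,q}$; when $q\mid p+1$, $\Gamma_{p,q}\subseteq 1/2+\F_p\sqrt{t}$ and neither $0$ nor $1$ has the required Frobenius trace $1$.

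For (iv), I would reduce $fg^i(\gamma)=\gamma$ to $\gamma^2\omega^{2i}=(1-\gamma)^2$, equivalently $\gamma/(1-\gamma)=\pm\omega^{-i}$, yielding exactly two candidate fixed points in $\F_{p^2}$: $\gamma_+=(1+\omega^i)^{-1}$ and $\gamma_-=(1-\omega^i)^{-1}$. The candidate $\gamma_-$ is ruled out because $1-\gamma_-^{-1}=\omega^i\in\langle\omega\rangle$ violates the defining condition of $\Gamma_{p,q}$. For $\gamma_+$, we have $1-\gamma_+^{-1}=-\omega^i\notin\langle\omega\rangle$, using that $|\langle\omega\rangle|=q$ is odd so $-1\notin\langle\omega\rangle$. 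The remaining subtlety is checking membership in the correct ambient set: if $q\mid p-1$, then $\omega\in\F_p$ and so $\gamma_+\in\F_p$; if $q\mid p+1$, then Frobenius sends $\omega$ to $\omega^{-1}$, and the trace $\gamma_++\gamma_+^p=(1+\omega^i)^{-1}+(1+\omega^{-i})^{-1}$ collapses to $1$ after combining fractions, placing $\gamma_+\in 1/2+\F_p\sqrt{t}$. The main obstacle is precisely this last Frobenius trace computation in the $q\mid p+1$ case; the remaining steps reduce to routine algebraic identities.
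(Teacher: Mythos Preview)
Your proposal is correct, and for the group structure and parts (i)--(iii) it essentially parallels the paper (the paper checks $gf=fg^{-1}$ rather than your $fgf=g^{-1}$, but these are equivalent).

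The genuine difference is in part (iv). Both you and the paper reduce the fixed-point equation to two candidates and use the defining condition $1-\gamma^{-1}\notin\langle\omega\rangle$ to discard one of them. At that point, however, the paper does \emph{not} verify directly that the surviving candidate $(1+\omega^i)^{-1}$ lies in $\Gamma_{p,q}$. Instead it argues indirectly: since $1/2$ is fixed by $f$ but by no nontrivial power of $g$, the orbit of $1/2$ under $G\cong D_{2q}$ has size $q$, so each of its $q$ points has stabilizer of order $2$ and is therefore fixed by exactly one reflection $fg^i$; hence every $fg^i$ has a fixed point in $\Gamma_{p,q}$, which must then be the unique candidate already found. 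Your route is the direct one: you check $1-\gamma_+^{-1}=-\omega^i\notin\langle\omega\rangle$ using that $q$ is odd, and in the $q\mid p+1$ case you compute the Frobenius trace $\gamma_+ + \gamma_+^{\,p}=(1+\omega^i)^{-1}+(1+\omega^{-i})^{-1}=1$ to place $\gamma_+$ in $1/2+\F_p\sqrt{t}$. The paper's orbit-stabilizer argument is slicker and sidesteps the case analysis on $q\mid p\pm 1$; your argument is more elementary and makes the membership $\gamma_+\in\Gamma_{p,q}$ fully explicit, which is arguably more informative.
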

\begin{proof}
Part (i) is obvious. For the rest of the proof, let $0<i<q$. We have $\gamma g^i=\gamma$ if and only if $\gamma\omega^i = \gamma(\gamma\omega^i+(1-\gamma)\omega^{-i})$, which is equivalent to $\gamma(1-\gamma)\omega^i = \gamma(1-\gamma)\omega^{-i}$. Clearly, $\gamma=0$, $\gamma=1$ are fixed points as long as they lie in $\Gamma_{p,q}$, which happens if and only if $q$ divides $p-1$. If $\gamma\not\in\{0,1\}$ and $\gamma g^i =\gamma$ then $\omega^i=\omega^{-i}$, a contradiction.

Suppose now that $\gamma fg^i=\gamma$. Then $(1-\gamma)g^i=\gamma$, $(1-\gamma)\omega^i = \gamma((1-\gamma)\omega^i + \gamma\omega^{-i})$, and $(1-\gamma)^2\omega^i = \gamma^2\omega^{-i}$. We certainly have $\gamma\ne 0$ and thus $((1-\gamma)/\gamma)^2=\omega^{2i}$, which we rewrite as $(1-\gamma^{-1})^2 = \omega^{2i}$. Then either $1-\gamma^{-1}=\omega^i$ (which implies $1-\gamma^{-1}\in\langle\omega\rangle$, a contradiction with $\gamma\in\Gamma_{p,q}$), or $1-\gamma^{-1}=-\omega^i$, which implies $\gamma = (1+\omega^i)^{-1}$, the only candidate for a fixed point of $fg^i$.

Now, $|f|=2$ since $f^2=1$ and $\gamma f\ne\gamma$ if $\gamma\ne 1/2$. Also $|g|=q$ since $g^q=1$ and $\gamma g\ne\gamma$ whenever $\gamma\not\in\{0,1\}$. Finally,
\begin{displaymath}
    \gamma gf = 1-\frac{\gamma\omega}{\gamma\omega+(1-\gamma)\omega^{-1}} = \frac{(1-\gamma)\omega^{-1}}{\gamma\omega+(1-\gamma)\omega^{-1}},
\end{displaymath}
while
\begin{displaymath}
    \gamma fg^{-1} = (1-\gamma)g^{-1} = \frac{(1-\gamma)\omega^{-1}}{(1-\gamma)\omega^{-1}+\gamma\omega}.
\end{displaymath}
Thus $gf=fg^{-1}$ and $G\cong D_{2q}$ follows.

Since $1/2$ is fixed by $f$ but not by $g$, the orbit-stabilizer theorem implies that the orbit of $1/2$ contains $q$ elements. In turn, each of these $q$ elements has a stabilizer of size $2$, so it must be stabilized by some $fg^i$ of $G$. We conclude that the purported fixed points $(1+\omega^i)^{-1}$ of $fg^i$ are indeed fixed points.
\end{proof}

We are ready to prove the main result, Theorem \ref{Th:Main}:

Let us count the orbits of $G=\langle f,g\rangle$ on the set $\Gamma_{p,q}$ or cardinality $p-q+1$. We will use Proposition \ref{Pr:Group} without reference. For $\gamma\in \Gamma_{p,q}$, let $O(\gamma)$ be the orbit of $\gamma$.

First suppose that $q$ divides $p-1$. Let $p-1=kq$ and note that $|\Gamma_{p,q}|=(k-1)q+2$. We have $0$, $1\in\Gamma_{p,q}$ and $O(0)=\{0,1\}$, leaving $(k-1)q$ elements. The orbit $O(1/2)$ accounts for the remaining $q$ points
fixed by some element of $G$. All the other $(k-2)q$ elements lie in orbits of size $2q$, so there must be $(k-2)/2$ such orbits. Altogether, we have counted $1+1+1+(k-2)/2 = (p-1+4q)/(2q)$ Bol loops of order $pq$ up to isotopism, including the cyclic group.

Now suppose that $q$ divides $p+1$. Let $p+1=\ell q$ and note that $|\Gamma_{p,q}| = (\ell-1)q$. Also note that $0$, $1\not\in\Gamma_{p,q}$. The orbit $O(1/2)$ again accounts for $q$ elements, and these are the only elements with nontrivial stabilizers. The remaining $(\ell-2)q$ elements lie in $(\ell-2)/2$ orbits of size $2q$. Altogether, we have counted $1+1+(\ell-2)/2 = (p+1+2q)/(2q)$ Bol loops up to isotopism. We note that $\ell$ must be even and therefore
\begin{displaymath}
    \left\lfloor\frac{p-1+4q}{2q}\right\rfloor =
    \left\lfloor\frac{p+1+4q-2}{2q}\right\rfloor =
    \left\lfloor\frac{\ell q+4q-2}{2q}\right\rfloor =
    \left\lfloor\frac{\ell}{2}+2-\frac{2}{2q}\right\rfloor =
    \frac{\ell}{2}+1 = \frac{p+1+2q}{2q},
\end{displaymath}
finishing the proof of Theorem \ref{Th:Main}.

\section*{acknowledgment}
We thank Izabella Stuhl for several discussions on the topic of this paper.

\end{document}